\definecolor{Blue}{rgb}{0.3,0.3,0.9}
\definecolor{e-mail}{rgb}{0,.40,.80}
\definecolor{reference}{rgb}{.20,.60,.22}
\definecolor{citation}{rgb}{0,.40,.80}
\newtheorem{theorem}{Theorem}[section]
\newtheorem{proposition}[theorem]{Proposition}
\numberwithin{equation}{section}
\theoremstyle{remark}
\newcommand{\classification}[2]{{\small\noindent 2010 \textit{MSC:}\ \  #1 (primary); \   #2 (secondary)\vspace{0pc}}}
\begin{document}

\title{A Multi-Class Extension of the Mean Field Bolker-Pacala Population Model\footnote{This work was supported in part 
by NSF grant DMS-1515800 and 
by PSC-CUNY grant \#68170-00 46 and by funds provided by UNC Charlotte.}\protect}
\author{Mariya Bessonov
 \\\small Department of Mathematics\\[-0.8ex]
\small CUNY New York City College of Technology, NY 11201 \\
\\
Stanislav Molchanov, Joseph Whitmeyer
 \\\small Department of Mathematics and Statistics\\[-0.8ex]
\small University of North Carolina at Charlotte, Charlotte, NC 28223\\}

\date{}
\maketitle

\begin{abstract}
We extend our earlier mean field approximation of the Bolker-Pacala model of population dynamics by dividing the population into $N$ classes, using a mean field approximation for each class but also allowing migration between classes as well as possibly suppressive influence of the population of one class over another class.  For $N \geq 2$, we obtain one symmetric non-trivial equilibrium for the system and give global limit theorems.  For $N=2$, we calculate all equilibrium solutions, which, under additional conditions, include multiple non-trivial equilibria.  Lastly, we prove geometric ergodicity regardless of the number of classes when there is no population suppression across the classes.
\\

\classification{92D25}{60J10}
\section{Introduction}

\end{abstract}

The Bolker-Pacala (BP) model of population dynamics, from biology, involves processes of birth, death, 
and migration, as well as competition or suppression.  In a previous paper \cite{BMW14}, we analyzed a 
mean-field approximation of the BP model, obtaining results such as local and global central limit theorems 
for population size.  While that model treated basic population questions, in this paper we extend the 
mean-field approach to address additional topics.

Specifically, we consider a population now divided into $N$ classes or ``boxes," and analyze a mean-field 
approximation for each box.  We allow the possibility of migration between boxes and of competitive effects or 
the suppression of the population in one box by the population in other boxes.  While it is possible to think 
of the boxes as geographical areas, it is perhaps most intriguing to view them as segments of a population such 
as social classes.  In this case, the $N$-box BP model becomes a model of social stratification.  Migration 
between boxes corresponds, then, to social mobility with the parameters for migration giving the rates of social 
mobiliy.  The parameters for competition \emph{within} boxes may correspond to constraints, such as economic 
constraints, on the size of classes.  It is questionable whether suppression \emph{across} classes would exist 
or whether these parameters would be 0.

For $N = 2$ and 3 we obtain two new results:
\begin{itemize}
\item  first, allowing suppression of population across boxes 
creates the possibility of more than one non-trivial equilibrium population level; 
\item second, when there is only one 
non-trivial equilibrium, such as in the absence of such cross-box suppression, the equilibrium level is not 
affected by migration from one box to another. 
\end{itemize}

The paper is laid out as follows.  In Section~\ref{sec:description}, we describe the $N$-box mean field Bolker-Pacala model.  
In the following Sections~\ref{sec:Nbox} and~\ref{sec:GlobalN}, we give a global analysis, showing the existence of one symmetric, non-trivial equilibrium point, and presenting global limit theorems for $N \geq 2$.  Exact results for $N = 2$ are given there.   In Section~\ref{sec:Ergodicity}, we establish the geometric ergodicity of the 
process regardless of the number of boxes when population suppression from other boxes is 0, and gives the 
equilibrium point when internal competition is identical for all boxes.


\section{Preliminaries: description of the process}\label{sec:description}
We begin with an introduction of the general Bolker-Pacala model, which can be formulated as follows. 
There is some initial homogeneous population on $\mathbb{R}^d$, that is, a locally 
finite point process $$n_0(\Gamma) = \#\text{(particles in $\Gamma$ at time $t=0$)},$$ 
where $\Gamma$ denotes a bounded and connected region in $\mathbb{R}^d$. We refer to individual 
members of the population as particles and the location of a particle on  $\mathbb{R}^d$ as the 
site of that particle. For instance, one can consider $n_0(\Gamma)$ to be a Poissonian point 
field with intensity $\rho > 0$, 
i.e., $$P\{n_0(S) = k\} = \exp(-\rho |S|)\frac{(\rho |S|)^k}{k!},\ k=0,1,2,\ldots$$ 
where $S\subset\Gamma$ and $|S|$ represents the (finite) Lebesgue measure of $S$, and the number 
of points in each set of any disjoint collection of subsets of $\Gamma$ is independent.  
The following rules dictate the evolution of the field:
\begin{enumerate}
\item[i)] Each particle, independent of the others, during time interval $(t,t+dt)$ can 
produce a new particle (offspring or seed) with probability $\beta \, dt + o(dt^2) = A^+ dt + o(dt^2)$, $A^+>0$.  The 
initial particle remains at its initial position $x$ but the offspring jumps to $x+z+dz$ 
with probability $$a^+(z) dz,\quad A^+=\int\limits_{\mathbb{R}^d} \!\! a^+(x) dx.$$  Note 
that this can be seen equivalently as two random events, the birth of a particle and its 
dispersal, as in Bolker and Pacala's presentation \cite{bp99,bpn03}, or as a single random event, 
as in our model.  (We stress that this differs from the classical branching process, in which the 
``parental" particle and its offspring commence independent motion from the same point.)  We will 
assume that all offspring evolve independently according to the same rules. 
\item[ii)] Each particle at point $x$ during the time interval $(t,t+{d}t)$ dies with 
probability $\mu\, {d}t + o(dt^2)$, where  $\mu$ is the mortality rate.
\item[iii)] The competition factor leads to many interesting properties in this model.  If two particles are located at the 
points $x,y \in \mathbb{R}^d$, then each of them dies with probability $a^-(x-y)dt + o(dt^2)$ during 
the time interval $(t,t+dt)$ (due to independence, the probability that both die is $o(dt^2)$). This requires, of course, 
that $a^-(\cdot)$ be integrable; set $$A^- = \int\limits_{\mathbb{R}^d} \!\! a^-(z) {d}z.$$ 
The total effect of competition on a particle is the sum of the effects of competition with all individual particles. 
\end{enumerate}
Here we have interacting particles, in contrast to the usual branching process.  One can expect 
physically that for arbitrary non-trivial competition ($a^-\in C(\mathbb{R}^d)$, $A^- > 0$), there 
will exist a limiting distribution of the particles.  At each site $x\in\mathbb{R}^d$, with population at time $t$ 
given by $n(t,x)$, three rates are relevant, the birth rate $\beta$ and mortality rate $\mu$, each 
proportional to $n(t,x)$ and the death rate due to competition, proportional to $n(t,x)^2$.   
Heuristically, when $n(t,x)$ is small the linear effects will dominate. Thus, if $\beta > \mu$ 
the population is expected to increase.  As the population grows and $n(t,x)$ becomes large enough, 
however, the quadratic effect due to competition will become 
increasingly dominant, which will prevent unlimited population growth.  At present, this fact has been proven 
only under strong restrictions on $a^+$ and $a^-$ \cite{fkkk2}.

\section{The $N$-box model}\label{sec:Nbox}
In the first part of Section~\ref{1box}, we recall the mean-field approximation to the 
Bolker Pacala model from \cite{BMW14}, in which we considered the $1$-box model.
In Section~\ref{Nbox}, we generalize our mean-field approximation to the $N$-box model.

\subsection{The $1$-box model}\label{1box}
The mean field approximation, ``$1$-box model'' of the BP process from \cite{BMW14}
led to the special Markov chain: the logistic random walk on the half-axis 
$\mathbb{Z}_{+} = \{0,1,2,\ldots \}$. In this model, we considered a system of particles (thinking 
of particles as individual members of some population).
All particles live on the lattice, $\mathbb{Z}^d$. Each lattice point 
$\mathbf{x}$ has an associated square $\mathbf{x}+[0,1)^d$, 
and the number of particles at $\mathbf{x}$ represents 
the number of inhabitants in the continuous model of 
that square in $\mathbb{R}^d$ that is associated with $\mathbf{x}\in\mathbb{Z}^d$.  

We let $Q_{L} \subset \mathbb{Z}^d$ be a box with $|Q_{L}| = L$, $L$ a large 
parameter, and suppose that no particles exist outside of $Q_{L}$.

We modify the notation from \cite{BMW14} slightly to match the notation 
in this paper. We recall the migration rate between sites on the lattice and competition rate, 
at which a particle at $\mathbf{x}$ outcompetes another particle at $\mathbf{y}$, in the $1$-box model: 
\begin{align*}
 a^+(\mathbf{x},\mathbf{y}) \equiv \frac{a^+}{L}\quad & \text{ for } \mathbf{x}, \mathbf{y}\in Q_L\cap\mathbb{Z}^d, \\
 a^-(\mathbf{x},\mathbf{y}) \equiv \frac{a^-}{L^2}\quad & \text{ for } \mathbf{x}, \mathbf{y}\in Q_L\cap\mathbb{Z}^d
\end{align*}
for constants $a^+, a^- \ge 0$. With such rates, the distribution of a particle after 
a jump due to migration is uniform on $Q_L$. Let  $\beta$ and $\mu$
be the birth and mortality rates, respectively. We assume that $\beta > \mu$.

If $n(t,\mathbf{x})$ represents the number of particles at site $\mathbf{x}\in Q_L\cap\mathbb{Z}^d$
(we do not restrict the number of particles per site), then
\begin{equation*}
 N_L(t) = \sum_{\mathbf{x}\in Q_L\cap\mathbb{Z}^d} n(t,\mathbf{x})
\end{equation*}
is the total number of particles in $Q_L$ at time $t$. $N_L(t)$ is a Markov process,
which we call the ``logistic'' Markov chain.

The transition rates for $N_L(t)$ are 
\begin{displaymath}
  P\left(N_L(t+dt) = j \:|\: N_L(t) = n  \right) =  \left\{
     \begin{array}{ll}
       n\beta\,dt + o(dt^2)& \text{ if } j = n+1\\
       n\mu\,dt + a^-\cdot n^2/L \,dt + o(dt^2)& \text{ if } j = n-1\\
       o(dt^2)& \text{ otherwise }
     \end{array}
   \right.
\end{displaymath} 
We observe that if $N_L(t)$ is large, the random walk has a left drift,
whereas if $N_L(t)$ is small, the random walk has a drift to the right. 
An important point is the equilibrium point, $n_L^*$, where the rates
to the left and to the right are equal, that is,
\begin{equation*}
 \beta n_L^* = \mu n_L^* + \frac{a^-\cdot n_L^{*2}}{L},
\end{equation*}
Thus,
\begin{equation*}
 n_L^* = \left\lfloor \frac{L(\beta-\mu)}{a^-} \right\rfloor.
\end{equation*}
We showed in \cite{BMW14} that as $L\to\infty$, $N_L(t)$ 
tends quickly to a neighborhood of $n_L^*$ and afterward fluctuates randomly around $n_L^*$.
See \cite{BMW14} for further results including a local Central Limit Theorem and large deviations.

\subsection{The $N$-box model}\label{Nbox}
The more general $N$-box model gives rise to a random walk on 
$$(\mathbb{Z}_{+})^{N} = \{ (n_1,n_2,\ldots, n_N) \:|\: n_i \in \mathbb{Z}_{+}, 1\leq i\leq N \}.$$
Consider a system of $N$ disjoint rectangles $Q_{i,L}\subset\mathbb{R}^2$,
$i=1,2,\ldots,N$, with $$\left|Q_{i,L}\cap\mathbb{Z}^2\right| = L.$$
 As in the usual BP model, 
 introduce the migration potential $a^{+}$ and the competition potential $a^{-}$
 that are constant on each $Q_{i,L}$. For $\mathbf{x}\in Q_{i,L}, \mathbf{y}\in Q_{j,L}$,
\begin{equation}
 a^{-}_{L}(\mathbf{x},\mathbf{y}) = a^{-}_{ij}/L^{2}, \qquad\qquad\qquad i,j = 1,2,\ldots,N,
\end{equation}
 and
\begin{equation}
 a^{+}_{L}(\mathbf{x},\mathbf{y}) = a^{+}_{ij}/L, \qquad\qquad\qquad i,j = 1,2,\ldots,N.
\end{equation}
Specifically, $a^{-}_{ij}$ indicates the depressive effect on the population in 
box $i$ due to the population in box $j$ (i.e., competition between boxes $i$ and $j$),
while $a^{+}_{L}(\mathbf{x},\mathbf{y})$ is the rate of migration
from  $\mathbf{x}\in Q_{i,L}$ to $\mathbf{y}\in Q_{j,L}$.

Let  $\bigcup_{i=1}^{N} Q_{i,L} = Q_L$. Then set
\begin{equation*}
A^{+}_{i} := \sum_{\mathbf{y}\in Q_L} a^{+}(\mathbf{x},\mathbf{y}) = \sum_{j=1}^{N} a^{+}_{ij},\quad
 A^{-}_{i} := \sum_{\mathbf{y}\in Q_L} a^{-}(\mathbf{x},\mathbf{y}) = \sum_{j=1}^{N} a^{-}_{ij}
\end{equation*}
Assume that $$A_i^+, A_i^- \leq A < \infty$$ uniformly in $L$.
In this setup, the number of squares $N$ is fixed. 
The parameters $\beta_i,\mu_i > 0$ represent the natural (biological) birth and death 
rates of particles in box $i$, $i=1,\ldots,N$, respectively. 

The population in each square $Q_{i,L}$, $i=1,\ldots,N$, at time $t$ will be represented by 
\begin{align}\label{RW}
\boldsymbol{n}(t) = \{n_1(t),n_2(t),\ldots,n_N(t)\},
\end{align}a continuous time random 
walk on $(\mathbb{Z}_{+})^{N}$ with rates obtained from, for $i,j = 1,2,\ldots,N$,
\begin{align}
 \boldsymbol{n}(&t + dt | \boldsymbol{n}(t)) \\ \notag&= \boldsymbol{n}(t) + 
\begin{dcases}
 \, e_i & \text{w. pr. }  \beta_i n_i(t)dt + o(dt^2)\\
-e_i & \text{w. pr. } \mu_i n_i(t)dt  +\frac{n_i(t)}{L}\sum_{j=1}^{N}a^{-}_{ij}n_j(t)dt + o(dt^2)\\
e_{j} - e_{i} & \text{w. pr. } n_i(t)a^{+}_{ij}dt + o(dt^2), \quad j\neq i \\  
 0 & \text{w. pr. } 1- \sum_{i=1}^{N}(\beta_i+\mu_i)n_i(t)dt \\
 & \qquad \qquad - \frac{1}{L}\sum_{i,j}n_i(t)n_j(t)a^-_{ij}dt +\sum_{i,j}n_i(t)a^+_{ij} + o(dt^2)\\
 \text{other } & \text{w. pr. } o(dt^2)
\end{dcases}
\end{align}
where $e_i$ is the vector with $1$ in the $i^{th}$ position and $0$ everywhere else.

We define the \emph{transition function} $p\left(\mathbf{n}(t),\mathbf{n}(t) + \mathbf{k}\right)$ 
from the principal probabilities above, that is,
\begin{align}\label{tf}
 p(\mathbf{n}(t),&\mathbf{n}(t) + \mathbf{k}) \\&=
\begin{dcases}
\beta_i n_i(t) &  \, \mathbf{k} = e_i \\
\mu_i n_i(t) +\frac{n_i(t)}{L}\sum_{j=1}^{N}a^{-}_{ij}n_j(t) & \, \mathbf{k} = -e_i \\
n_i(t)a^{+}_{ij} & \, \mathbf{k} = e_{j} - e_{i},\,  j\neq i \\  
- \sum_{i=1}^{N}(\beta_i+\mu_i)n_i(t) - \frac{1}{L}\sum_{i,j}n_i(t)n_j(t)a^-_{ij} + & \, \mathbf{k} = 0 \\
\qquad +\sum_{i,j}n_i(t)a^+_{ij} & \\
0 & \, \text{all other } \mathbf{k}
\end{dcases}\notag
\end{align}

\section{Global analysis for $N$ boxes}\label{sec:GlobalN}

\subsection{Preliminaries}
Let us temporarily fix $L$.  
We set $$\frac{n_i(t)}{L} := z_i(t), \qquad i=1,\ldots,N.$$
Define 
\begin{equation*}
 f_L(\mathbf{z}(t),\mathbf{k}) := \frac{1}{L} p(\mathbf{n}(t),\mathbf{n}(t) + \mathbf{k}),
\end{equation*} 
where $\mathbf{z}(t) = (z_1(t),\ldots,z_N(t))$, $\mathbf{n}(t) = (n_1(t),\ldots,n_N(t))$, and
$\mathbf{k} = (k_1,\ldots,k_N)$, $k_i = 1, 0, \text{ or } -1$ for $i=1,\ldots,N$,
and $p$ is the transition function (\ref{tf}).
Then
        \begin{align*}
         f_{L}(\mathbf{z}(t),&\mathbf{k})= \left\{
        \begin{array}{lll}
        \beta_i z_i & \mathbf{k}= e_i, & i=1,\ldots,N\\
        \mu_i z_i + a_{i,i}^- z_i^2 + \displaystyle\sum_{j\ne i}a_{i,j}^- z_iz_j & \mathbf{k}= -e_i, & i=1,\ldots,N\\
        a^{+}_{i,j} z_i & \mathbf{k} = e_j - e_i, & i,j=1,\ldots,N; i\neq j \\
       - \sum_{i=1}^{N}(\beta_i+\mu_i)z_i(t)   & \, \mathbf{k} = 0 \\
       \qquad - L\sum_{i,j}z_i(t)z_j(t)a^-_{ij}&\\
\qquad +\sum_{i,j}z_i(t)a^+_{ij} & \\        0 & \text{ otherwise}&\\
        \end{array}
        \right.
        \end{align*}  
Note that $f_L(\mathbf{z}(t),\mathbf{k})$ does not, in fact, depend on $L$.

Set the migration rate out of box $i$ $$M_i^+ := \sum_{j\neq i}a^+_{i,j}.$$
For the functional limit theorems to follow, define
for $i=1,\ldots, N$,
	\begin{align}
   F_i(\mathbf{z}(t)) :&= \displaystyle\sum_{k_i = -1}^{1} k_if(\mathbf{z}(t),\cdot)\label{dfs1}
   \left(\beta_i - \mu_i -M^+_i\right)z_i - a^{-}_{i,i} z_i^2 - 
   \displaystyle\sum_{j\ne i}a_{i,j}^- z_iz_j + \sum_{j\neq i}a^{+}_{j,i} z_j 
	\end{align}
and consider the system of differential equations
	\begin{equation}\label{dfs2}
	\frac{d \mathbf{z}(t)}{dt} = \mathbf{F}(\mathbf{z}(t))
	\end{equation}
An equilibrium for the system occurs precisely at the points where
\begin{equation}\label{dfs3}
	\mathbf{0} = \mathbf{F}(\mathbf{z}),
	\end{equation}
with one solution being $\mathbf{z} \equiv \mathbf{0}$. 

Set $p_i := \beta_i - \mu_i - M^+_i$. In matrix form, we have the equation
$$
A\left[\begin{array}{c}
z_1\\
\vdots\\
z_N
\end{array}\right]
+ B \left[\begin{array}{c}
z^2_1\\
\vdots\\
z^2_N
\end{array}\right] = \mathbf{0},
$$
where $B$ is a diagonal matrix:
\[A=\left[\begin{array}{ccccc}
p_{1} & a^+_{2,1} & a^+_{3,1} & \hdots & a^+_{N,1} \\
a^+_{1,2} & p_{2} & a^+_{3,2} & \hdots & a^+_{N,2} \\
\vdots&& \ddots && \vdots\\
\vdots&&& \ddots &\vdots\\
a^+_{1,N}&\hdots&&a^+_{N-1,N}& p_{N}
\end{array}\right],\ \
B=\left[\begin{array}{cccc}
a^-_{1,1} &  &  &    \\
 & a_{2,2}^- && \makebox(0,0){\text{\huge0}} \\
\makebox(0,0){\text{\huge0}}&& \ddots & \\
&&& a^-_{N,N}
\end{array}\right].\]

When $a^+_{i,j} = 0$ and $a^-_{i,j} = 0, i\neq j$, that is, 
there is no migration between boxes and no suppression across boxes,
there is a unique non-zero equilibrium $$z_i = \frac{\beta_i - \mu_i}{a^-_{i,i}},\ \ i=1,...,N.$$
This is, as would be expected, essentially the equilibrium for $N$ 
distinct, independent ``single box" mean field Bolker Pacala models, as found in \cite{BMW14}.


\subsection{More on equilibrium points}
We assume, in this section, symmetric conditions, that is, that conditions are identical for all boxes.  Thus, 
the biological birth and mortality rates are the same in each box:
$$\beta_i \equiv \beta \text{ and } \mu_i \equiv \mu, \,\, i=1,2,\ldots.$$ The ``inner'' competition rates within boxes are equal, satisfying 
$$a^-_{I} := a^{-}_{ii}, \qquad i=1,2,\ldots$$ and ``outer'' competition (from box to box) is the same 
$$a^-_{O} := a^{-}_{ij}, \qquad i\neq j.$$ 
We also set the common migration rate $$a^+:= a^{+}_{ij}, \qquad i\neq j.$$
So that the system does not inevitably die out, we assume that $\beta >\mu$.  

We begin with the case of two boxes ($N = 2$) or classes. The system \eqref{dfs2} may have up to four distinct 
non-negative singular points, that is, solutions of~\eqref{dfs3}.  All four solutions are real and non-negative only if 
\begin{equation}\label{N2cond}
a^{-}_{O}>a^{-}_{I}\ \ \text{ and }\ \  \beta -\mu > 2a^{+} \frac{a^{-}_{O}+a^{-}_{I}}{a^{-}_{O}-a^{-}_{I}} 
\end{equation}
They are as follows:  
\begin{enumerate}[1)]
	\item The trivial singular point, an unstable equilibrium for $\beta > \mu$, at $(0,0)$.

	\item $\left(\frac{\beta-\mu}{a^{-}_{I}+a^{-}_{O}},\frac{\beta-\mu}{a^{-}_{I}+a^{-}_{O}}\right)$, which always exists,
	even when \eqref{N2cond} is not satisfied.

	\item \begin{equation*}
\left(\begin{array}{l}
   \frac{\beta-\mu-2a^{+}}{2a^{-}_{I}} + \frac{\sqrt{(\beta-\mu-2a^{+})^2(a^{-}_{O}-a^{-}_{I})^2 - 
   4a^{-}_{I}a^{+}(a^{-}_{O} - a^{-}_{I})(\beta-\mu-2a^{+})}}{2a^{-}_{I}(a^{-}_{O}-a^{-}_{I})}, \\
 \frac{\beta-\mu-2a^{+}}{2a^{-}_{I}} - \frac{\sqrt{(\beta-\mu-2a^{+})^2(a^{-}_{O}-a^{-}_{I})^2 - 
   4a^{-}_{I}a^{+}(a^{-}_{O} - a^{-}_{I})(\beta-\mu-2a^{+})}}{2a^{-}_{I}(a^{-}_{O}-a^{-}_{I})}
\end{array}
\right)
\end{equation*}
	\item \begin{equation*}
\left(\begin{array}{l}
   \frac{\beta-\mu-2a^{+}}{2a^{-}_{I}} - \frac{\sqrt{(\beta-\mu-2a^{+})^2(a^{-}_{O}-a^{-}_{I})^2 - 4a^{-}_{I}a^{+}(a^{-}_{O} - 
   a^{-}_{I})(\beta-\mu-2a^{+})}}{2a^{-}_{I}(a^{-}_{O}-a^{-}_{I})}, \\
 \frac{\beta-\mu-2a^{+}}{2a^{-}_{I}} + \frac{\sqrt{(\beta-\mu-2a^{+})^2(a^{-}_{O}-a^{-}_{I})^2 - 4a^{-}_{I}a^{+}(a^{-}_{O} - 
 a^{-}_{I})(\beta-\mu-2a^{+})}}{2a^{-}_{I}(a^{-}_{O}-a^{-}_{I})}
\end{array}
\right)
\end{equation*}
\end{enumerate}
\begin{proposition}
In the event that all four equilibria exist, the third and fourth equilibria are 
stable while the second one is a saddle point and is not stable.
\end{proposition}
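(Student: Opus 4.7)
\bigskip

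\noindent\textbf{Proof proposal.} The plan is a standard linearization/Hartman--Grobman argument: compute the Jacobian of the vector field
\[
F_i(\mathbf{z}) = (\beta-\mu-a^+)z_i - a^-_I z_i^2 - a^-_O z_i z_j + a^+ z_j, \qquad \{i,j\}=\{1,2\},
\]
evaluate it at each of the three non-trivial equilibria, and read off the signs of the real parts of the eigenvalues from the trace and determinant. The Jacobian is
\[
J(\mathbf{z}) = \begin{pmatrix} (\beta-\mu-a^+) - 2a^-_I z_1 - a^-_O z_2 & a^+ - a^-_O z_1 \\[2pt] a^+ - a^-_O z_2 & (\beta-\mu-a^+) - 2a^-_I z_2 - a^-_O z_1 \end{pmatrix}.
\]

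For the symmetric equilibrium~(2), where $z_1 = z_2 = z^* := (\beta-\mu)/(a^-_I+a^-_O)$, the identity $(a^-_I+a^-_O)z^* = \beta-\mu$ reduces the diagonal entry to $-a^+ - a^-_I z^*$ and the off-diagonal to $a^+ - a^-_O z^*$. Since the matrix is symmetric of the form $\bigl(\begin{smallmatrix} \alpha & \gamma \\ \gamma & \alpha \end{smallmatrix}\bigr)$, its eigenvalues are $\alpha \pm \gamma$; a direct calculation gives
\[
\det J = (\beta-\mu)\Bigl[2a^+ - z^*(a^-_O - a^-_I)\Bigr].
\]
Condition~\eqref{N2cond} is exactly the statement $z^*(a^-_O-a^-_I) > 2a^+$, so $\det J < 0$ precisely when the other two equilibria exist, forcing one positive and one negative eigenvalue: this is the saddle.

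For the asymmetric equilibria~(3) and~(4), I would not substitute the explicit quadratic-formula expressions directly. Instead, adding and subtracting the two equations $F_1=F_2=0$ shows that at any asymmetric solution,
\[
z_1 + z_2 = s := \frac{\beta-\mu-2a^+}{a^-_I}, \qquad z_1 z_2 = \frac{s\,a^+}{a^-_O - a^-_I},
\]
and moreover $F_i = 0$ divided by $z_i$ gives $(\beta-\mu-a^+) = a^-_I z_i + a^-_O z_j - a^+ z_j/z_i$, which simplifies the diagonal entry of $J$ to $-a^-_I z_i - a^+ z_j/z_i$. The trace is then manifestly negative. For the determinant, expanding and using the symmetric-function values of $z_1+z_2$ and $z_1 z_2$ above to eliminate $z_1$, $z_2$, the algebra collapses to
\[
\det J = a^-_I\, s\,\bigl[\,s(a^-_O - a^-_I) - 4a^+\bigr] = s\,\bigl[(\beta-\mu)(a^-_O-a^-_I) - 2a^+(a^-_O+a^-_I)\bigr],
\]
which is strictly positive under~\eqref{N2cond}. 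Combined with the negative trace this gives two eigenvalues with negative real parts, so Hartman--Grobman yields local asymptotic stability of both equilibria~(3) and~(4).

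The main technical obstacle is purely computational: the algebraic simplification of $\det J$ at the asymmetric points. Brute substitution of the quadratic-formula expressions is unpleasant, so the key trick is to bypass them by working only with the symmetric functions $z_1+z_2$ and $z_1 z_2$ derived from the equilibrium equations themselves, after which everything reduces to exactly the threshold appearing in~\eqref{N2cond}.
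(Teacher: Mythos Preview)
Your proof is correct and complete, but the method differs substantially from the paper's. The paper attacks the asymmetric equilibria head-on: it substitutes the explicit quadratic-formula expressions for $(z_1^*,z_2^*)$ into the Jacobian, writes the eigenvalues as $\lambda_{1,2}=(A\pm\sqrt{B})/\bigl(2a^-_I(a^-_O-a^-_I)\bigr)$ with rather unwieldy polynomials $A$ and $B$ in the parameters, checks $A<0$ from~\eqref{N2cond}, and then splits into the cases $B\le 0$ (eigenvalues have real part $A/\text{denom}<0$) and $B>0$ (where it computes $A^2-B$ and shows it is positive, again via~\eqref{N2cond}). For the symmetric equilibrium it does not give the $N=2$ argument at all, deferring instead to the eigenvalue computation in the general-$N$ Proposition that follows.

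Your route is considerably cleaner. By extracting $z_1+z_2$ and $z_1 z_2$ directly from the equilibrium relations you never touch the square-root expressions, and the trace/determinant criterion replaces the case split on the discriminant $B$. The payoff is that the determinant collapses to $s\bigl[(\beta-\mu)(a^-_O-a^-_I)-2a^+(a^-_O+a^-_I)\bigr]$, which is visibly the threshold in~\eqref{N2cond}, so the link between ``the extra equilibria exist'' and ``the symmetric one becomes a saddle while the new ones are sinks'' is transparent. The paper's brute-force computation obscures this connection, though it does yield the eigenvalues themselves rather than just their sign pattern.
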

\begin{proof} 
For the stability of the third and fourth, a computation shows that the eigenvalues of the Jacobian matrix of $\mathbf{F}(\mathbf{z}) = \left(F_1(\mathbf{z}), F_2(\mathbf{z})\right)$ with $F_1$ and $F_2$ as in \eqref{dfs1} at an equilibrium point $\mathbf{z^*} = (z_1^*,z_2^*)$, 
$$
J\left(\mathbf{z^*}\right) = \begin{pmatrix}
\beta - \mu - a^+ -2a^-_Iz_1^*-a^-_Oz^*_2 & a^+ - a^-_Oz^*_1\\
a^+ - a^-_Oz^*_2 & \beta - \mu - a^+ -2a^-_Iz_2^*-a^-_Oz^*_1\\
\end{pmatrix}
$$   are of the form
$$
\lambda_1 = \frac{A +\sqrt{B}}{2a^-_I(a^-_O-a^-_I)}\quad\text{and}\quad \lambda_2= \frac{A - \sqrt{B}}{2a^-_I(a^-_O-a^-_I)}
$$
for the third and fourth equilibrium points, where
\begin{align*}
A=(a^-_O-a^-_I)((&\mu-\beta)a^-_O+2a^+(a^-_O+a^-_I)),\\
B=\left(a^-_I - a^-_O\right)^2&\left[\beta^2 \left(-2a^-_I + a^-_O\right)^2 + \left(a^-_O\right)^2 (2 a^+ + \mu)^2\right. \\
&+ 4 \left(a^-_I\right)^2 \left(-3 \left(a^+\right)^2 + \mu^2\right)- 4 a^-_I a^-_O \left(2 \left(a^+\right)^2 + 3 a^+ \mu + \mu^2\right)\\ 
     &\left.- 2 \beta \left(4\mu \left(a^-_I\right)^2  + \left(a^-_O\right)^2 (2 a^+ + \mu) - 2 a^-_I a^-_O (3 a^+ + 2 \mu)\right)\right].
\end{align*}
It follows that $A < 0$ since the first factor of $A$ is positive and the second factor of $A$ is negative by \eqref{N2cond}. Since $A<0$, $B \leq 0$ implies that the real part of each eigenvalue, $\Re(\lambda_i) < 0$, $i=1,2$, and, therefore, the claimed stability. If $B> 0$, then consider\begin{align*}
A^2&-B=-4a^-_I\left(a^-_I-a^-_O\right)^2\left(\beta-2a^+-\mu\right)\left[\left(\beta-\mu\right)\left( a^-_I- a^-_O\right)+2a^+\left(a^-_O + a^-_I\right)\right]\end{align*} 
By \eqref{N2cond}, $\beta-2a^+-\mu > 0$, since $$\frac{a^-_O + a^-_I}{a^-_O - a^-_I}>1$$ and also by \eqref{N2cond},
$$\left(\beta-\mu\right)\left( a^-_I- a^-_O\right)+2a^+\left(a^-_O + a^-_I\right) < 0,$$ 
we conclude that $\Re(\lambda_i) <0$, $i=1,2$, in this case as well. 

To see that the second equilibrium point is not stable in this case, one can similarly evaluate the eigenvalues of the Jacobian matrix. A proof 
for general $N$ is given below, thus we omit the details here. 
\end{proof}

However, if \eqref{N2cond} is not satisfied, then we have 
only one non-trivial singular point, 
$$\left(\frac{\beta-\mu}{a^{-}_{I}+a^{-}_{O}},
\frac{\beta-\mu}{a^{-}_{I}+a^{-}_{O}}\right),$$ 
which is a stable equilibrium in this case.  
Note that this is the only non-trivial equilibrium 
if $a^{-}_{O}=0$, i.e., there is no suppression 
across boxes or classes.  This is the same 
equilibrium point, then, that is found for single 
boxes in the absence of any migration or mobility.

Note, also, that even if $a^{-}_{O}>a^{-}_{I}$ the 
existence of the third and fourth 
equilibria depends on low rates of migration between 
boxes (or social mobility between classes); 
these equilibria vanish if $a^{+}$ is too great.  
This is somewhat contrary to what one 
might suppose, that low rates of migration or mobility 
would keep the equilibria inside boxes at or near the original equilibria.

For three boxes or classes, $N = 3$, the results are similar.  
In particular, two equilibria always exist:
\begin{enumerate}[1)]
	\item The trivial singular point, an unstable equilibrium for $\beta>\mu$, at $(0,0,0)$, and

	\item $\left(\frac{\beta-\mu}{a^{-}_{I}+ 2a^{-}_{O}},
	\frac{\beta-\mu}{a^{-}_{I}+2a^{-}_{O}},\frac{\beta-\mu}{a^{-}_{I}+2a^{-}_{O}}\right)$.
 \end{enumerate}
If population suppression across boxes or classes does not occur, $a^{-}_{O} = 0$, the second of 
these is the only non-trivial equilibrium.  Otherwise, under additional conditions, including again, 
sufficiently low migration between boxes, multiple equilibria can exist.

  \begin{proposition}\label{eig}
 For $N\geq 2$, the points $\mathbf{0}$ and $\mathbf{z}^*\in\mathbb{R}^N$ with $$z_i^* = \frac{\beta - \mu}{a^{-}_{I}+(N-1)a^{-}_{O}}$$ are equilibrium points of \eqref{dfs2}, with $\mathbf{z}^*$ being stable only when
 \begin{equation}\label{condN}
 \left(\beta-\mu\right)\left(a^{-}_{O} - a^{-}_{I}\right) < 
 Na^{+}\left(a^{-}_{I} + (N-1)a^{-}_{O}\right)
 \end{equation}
 \end{proposition}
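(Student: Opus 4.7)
The plan is to proceed in two steps: first verify that $\mathbf{z}^*$ is indeed a fixed point of \eqref{dfs2}, then linearize at $\mathbf{z}^*$ and exploit the permutation symmetry of the resulting Jacobian to compute its eigenvalues explicitly.

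For the equilibrium check, I would substitute $\mathbf{z}^*$ into \eqref{dfs1}. Under the symmetric assumptions one has $M_i^+ = (N-1)a^+$ and $\sum_{j\neq i} a^+_{j,i} z_j^* = (N-1)a^+ z^*$, so the migration in-flow and out-flow at $\mathbf{z}^*$ cancel. What remains is the scalar equation $(\beta-\mu) z^* = (a^-_I + (N-1)a^-_O)(z^*)^2$, which is solved by the prescribed value.

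For stability I would compute the Jacobian $J(\mathbf{z}^*)$ from \eqref{dfs1}. By symmetry every diagonal entry equals some common $d$ and every off-diagonal entry equals some common $c$, i.e.
\begin{equation*}
J(\mathbf{z}^*) \;=\; (d-c)\, I + c\, \mathbf{1}\mathbf{1}^\top .
\end{equation*}
Using the equilibrium identity $\beta - \mu = (a^-_I + (N-1)a^-_O)\, z^*$ to eliminate $\beta - \mu$, a short calculation gives
\begin{equation*}
d \;=\; -\,a^-_I z^* - (N-1)a^+, \qquad c \;=\; a^+ - a^-_O z^*.
\end{equation*}
Any matrix of the form $(d-c)I + c\,\mathbf{1}\mathbf{1}^\top$ has eigenvalue $d + (N-1)c$ on the line spanned by $\mathbf{1}$ and eigenvalue $d - c$ of multiplicity $N-1$ on $\mathbf{1}^\perp$. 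A direct computation yields $d + (N-1)c = -(a^-_I + (N-1)a^-_O)z^* = -(\beta - \mu) < 0$, which is negative unconditionally by the standing assumption $\beta > \mu$. Stability is therefore governed entirely by the sign of $d - c = (a^-_O - a^-_I)z^* - N a^+$; substituting the value of $z^*$ and clearing the positive denominator $a^-_I + (N-1)a^-_O$ turns $d - c < 0$ into exactly \eqref{condN}.

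The main obstacle here is bookkeeping rather than any conceptual difficulty: the key observation that makes the argument work uniformly in $N$ is the recognition that $J(\mathbf{z}^*)$ has the ``equicorrelation'' structure $\alpha I + \gamma\, \mathbf{1}\mathbf{1}^\top$, whose spectrum splits cleanly along the two invariant subspaces $\mathrm{span}(\mathbf{1})$ and $\mathbf{1}^\perp$. Once that structure is used, the stability question collapses to the two scalar sign tests above, one of which is automatic and the other of which is precisely \eqref{condN}. I would also expect the claim (used but not proved in the preceding proposition) that the symmetric equilibrium is a saddle in the $N=2$ two-box case to drop out of this same computation specialized to $N=2$, since violation of \eqref{condN} makes the $\mathbf{1}^\perp$-eigenvalue $d-c$ positive while the $\mathbf{1}$-eigenvalue remains negative.
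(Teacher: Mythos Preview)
Your proposal is correct and follows essentially the same route as the paper: both compute the symmetric Jacobian at $\mathbf{z}^*$, identify its two distinct eigenvalues via the invariant subspaces $\mathrm{span}(\mathbf{1})$ and $\mathbf{1}^\perp$, and reduce stability to the sign of the $\mathbf{1}^\perp$-eigenvalue. Your phrasing via the ``equicorrelation'' decomposition $(d-c)I + c\,\mathbf{1}\mathbf{1}^\top$ is a touch cleaner than the paper's rank check on $J(\mathbf{z}^*)-\lambda_i I$, but the mathematical content is identical.
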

 
 \begin{proof}
One can check that $\mathbf{0}$ and $\mathbf{z}^*$  are equilibrium points by plugging them directly into \eqref{dfs3}. 
To see that $\mathbf{z}^*$ is stable under the condition \eqref{condN}, we again consider the Jacobian of $\mathbf{F}(\mathbf{z}) = \left(F_1(\mathbf{z}), \ldots, F_N(\mathbf{z})\right)$ with $F_i$ as in \eqref{dfs1}
with entries given by 
$$ 
\left(J\left(\mathbf{z^*}\right)\right)_{ij} = 
\begin{cases}
\beta - \mu - (N-1)a^{+} - 2a^{-}_{I}z^*_i - a^{-}_{O}(N-1)z^*_i, & i=j,\\
a^{+} - a^{-}_{O}z^*_i, & i\neq j
\end{cases}
$$
Given the special form of this matrix, the distinct eigenvalues are 
$$
\lambda_1 = \frac{(\beta-\mu)\left(a^{-}_{O}-a^{-}_{I}\right) - Na^{+}\left(a^{-}_{I}+(N-1)a^{-}_{O}\right)}{a^-_I + (N-1)a^-_O}\quad\text{and}\quad \lambda_2= \mu - \beta.$$
To see this, note that 
$$\left(J\left(\mathbf{z^*}\right) - \lambda_1 I_N\right)_{ij} 
= a^{+} - \frac{a^{-}_{O}(\beta - \mu)}{a^{-}_{I}+(N-1)a^{-}_{O}},$$ 
where $I_N$ is the $N\times N$ identity matrix, for all $i,j = 1,\ldots, N$. This matrix has rank $1$, thus the eigenspace of $\lambda_1$ 
is $(N-1)$-dimensional and so the multiplicity of $\lambda_1$ is $N-1$. To check that $\lambda_2$ is an eigenvalue 
with multiplicity $1$, we note that 
\begin{equation*}
\left(J\left(\mathbf{z^*}\right) - \lambda_2 I_N\right)_{ij} 
= 
\begin{cases}
 - (N-1)\left[a^{+} - \frac{a^{-}_{O}(\beta - \mu)}{a^{-}_{I} + (N-1)a^{-}_{O}}\right], & i = j,\\
a^{+} - \frac{a^{-}_{O}(\beta - \mu)}{a^{-}_{I} + (N-1)a^{-}_{O}}& i \neq j
\end{cases}
\end{equation*}
If we add each of rows $2$ through $N$ to the first row of $J\left(\mathbf{z^*}\right) - \lambda_2 I_N$, we obtain a zero row and it follows that $$J\left(\mathbf{z^*}\right) - \lambda_2 I_N$$ has rank $N-1$. Thus $\lambda_2$ is an eigenvalue of $J\left(\mathbf{z^*}\right)$ of multiplicity $1$.

$\lambda_1 < 0$ precisely when condition \eqref{condN} is satisfied and $\lambda_2<0$ from our assumption that $\beta > \mu$.
 \end{proof}


\subsection{Global limit theorems for $N$ boxes}
Here, we state a functional law of large numbers and functional central limit theorem, following \cite{tk70, tk71}.  
We now allow $L$ to vary, so we relabel slightly, setting $$z_{Li}(t) := \frac{n_i(t)}{L}, 
\qquad i=1,\ldots,N$$ and $Z_L(t) = (z_{L1}(t),\ldots,z_{LN}(t))$.
\begin{theorem}[Functional LLN]
Let $(z_1^*,\ldots, z_N^*)$ denote a unique stable equilibrium for the system given in \eqref{dfs1} 
and \eqref{dfs2}.  As $L \to \infty$, $$Z_L(t) \to Z(t)=(z_1(t),\ldots,z_N(t))$$ uniformly in 
probability, where $Z(t)$ is a deterministic process, the solution of
\begin{align}
	\frac{d z_j(t)}{dt} &= F_j(z_1(t),\ldots,z_N(t)),\ \ j = 1,\ldots, N, \label{cm1} \\
	z_1(0) &= z_1^*, \ldots,z_N(0) = z_N^*.\notag
	\end{align}
with $F_1, \ldots, F_N$ given in \eqref{dfs1}.
\end{theorem}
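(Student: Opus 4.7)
The plan is to recognize the rescaled process $Z_L(t) := L^{-1}\mathbf{n}(t)$ as a density-dependent family of continuous-time Markov chains in the sense of Kurtz \cite{tk70,tk71} and invoke his functional LLN. The jump set $\mathcal{K} := \{\pm e_i,\; e_j-e_i : 1\leq i,j\leq N,\; i\neq j\}$ is finite; the calculation preceding \eqref{dfs1} shows that the scaled rates $f(\mathbf{z},\mathbf{k})$ are polynomials in $\mathbf{z}$ (linear for births and migration, quadratic for competitive deaths) that are independent of $L$; and the drift $\mathbf{F}(\mathbf{z}) := \sum_{\mathbf{k}\in\mathcal{K}} \mathbf{k}\,f(\mathbf{z},\mathbf{k})$ coincides with the vector field defined in \eqref{dfs1}. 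Because the ODE \eqref{cm1} starts at the equilibrium $\mathbf{z}^*$, the limiting trajectory is the constant $Z(t)\equiv\mathbf{z}^*$, and the theorem is asserting that the stochastic system stays close to $\mathbf{z}^*$ uniformly on compact time intervals.

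Next I would invoke Kurtz's random time-change representation: on a suitable probability space there exist independent rate-$1$ Poisson processes $\{Y_\mathbf{k}\}_{\mathbf{k}\in\mathcal{K}}$ such that
\[
Z_L(t) = Z_L(0) + \sum_{\mathbf{k}\in\mathcal{K}}\frac{\mathbf{k}}{L}\,Y_\mathbf{k}\!\left(L\int_0^t f(Z_L(s),\mathbf{k})\,ds\right).
\]
Centering via $\widetilde Y_\mathbf{k}(u) := Y_\mathbf{k}(u)-u$ produces the semimartingale decomposition $Z_L(t) = Z_L(0) + \int_0^t \mathbf{F}(Z_L(s))\,ds + M_L(t)$, where the functional LLN for the standard Poisson process forces $\sup_{s\leq T}\|M_L(s)\|\to 0$ in probability so long as the integrated jump rates remain bounded. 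Subtracting the integral equation for $Z(t)$ and applying Gronwall's inequality with the local Lipschitz constant of $\mathbf{F}$ on a neighborhood of $\mathbf{z}^*$ then yields $\sup_{s\leq T}\|Z_L(s)-Z(s)\|\to 0$ in probability, provided the (tacitly assumed) initial condition $Z_L(0)\to\mathbf{z}^*$ in probability holds.

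The main obstacle is that $\mathbf{F}$ is only \emph{locally} Lipschitz: the competition terms $\sum_j a^-_{ij}z_iz_j$ are quadratic in $\mathbf{z}$, so a global Lipschitz constant is unavailable and Gronwall cannot be applied directly. The standard remedy is to introduce a cut-off stopping time $\tau_L^R := \inf\{t\geq 0 : \|Z_L(t)\|>R\}$ with $R > \|\mathbf{z}^*\|$, carry out the Gronwall estimate only up to $T\wedge\tau_L^R$, and then argue by a bootstrap that $P(\tau_L^R\leq T)\to 0$: the Gronwall bound keeps $Z_L$ within a vanishing distance of $\mathbf{z}^*$ on $[0,T\wedge\tau_L^R]$, hence strictly inside the ball of radius $R$ once $L$ is sufficiently large. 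A supplementary a priori moment bound, extracted from the inequality
\[
\frac{d}{dt}\mathbb{E}\!\left[\sum_i n_i(t)\right] \leq (\beta_{\max}-\mu_{\min})\,\mathbb{E}\!\left[\sum_i n_i(t)\right] - \frac{c}{L}\,\mathbb{E}\!\left[\sum_i n_i^2(t)\right]
\]
with $c = \min_i a^-_{i,i}$, provides uniform control on $\mathbb{E}\|Z_L(t)\|_1$ and closes the argument.
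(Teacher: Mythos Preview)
Your proposal is correct and aligns with the paper's approach: the paper does not give a self-contained proof of this theorem but simply states it ``following \cite{tk70, tk71},'' i.e.\ as a direct application of Kurtz's functional LLN for density-dependent Markov chains. Your write-up fleshes out exactly that argument---identifying the finite jump set, verifying that the scaled rates $f(\mathbf{z},\mathbf{k})$ are $L$-independent polynomials, invoking the Poisson time-change representation, and handling the quadratic (hence only locally Lipschitz) drift via a stopping-time/bootstrap---which is precisely the standard route and more detail than the paper itself provides.
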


Next, define $g_{ij}(z_1,...,z_N)$:
\begin{align}
   g_{ii}(z_1,...,z_N) :&= \displaystyle\sum_{k_i=-1}^1 k_i^2 f(z_1,...,z_N,\cdot,k_i,\cdot)\notag\\
    &= \beta z_i + \mu z_i + a^{-}_{ii} z_i^2 + \displaystyle\sum_{j\ne i}(a^{-}_{ij} z_i z_j + a^{+}_{ij} z_i + a^{+}_{ji} z_j) \label{cov1} \\
   g_{ij}(z_1,...,z_N) &= g_{ji}(z_1,...,z_N) := \displaystyle\sum_{k_i,k_j=-1}^1 k_ik_j f(z_1,...,z_N,\cdot,k_i,\cdot,k_j,\cdot) \notag \\
  &= - a^{+}_{ij} z_i - a^{+}_{ji} z_j \ \ \ \ \mathrm{for \, } i\ne j \notag
\end{align}

\begin{theorem}[Functional CLT]
Let $z^*=(z_1^*, ...,z_N^*)$ denote a unique stable equilibrium for the system given in \eqref{dfs1} and \eqref{dfs2}.  If $\sqrt{L} \left(Z_L(0)-z^* \right) = \zeta_0$, the processes 
\[\zeta_L(t):=\sqrt{L}(Z_L(t) - z^*) \]
converge weakly in the space of cadlag functions on any finite time interval $[0, T]$ to an Ornstein-Uhlenbeck process (OUP) $\zeta(t)$ with initial value $\zeta_0$, infinitesimal drift given by 
\begin{align*}
q_1 := \frac{\partial F_1\left(z^*_1,...,z^*_N\right)}{\partial z_1},\ \ldots,\ 
q_N := \frac{\partial F_N\left(z^*_1,...,z^*_N\right)}{\partial z_N}
\end{align*}
and the infinitesimal covariance matrix with entries given by $$a_{ij} := g_{ij}\left(z^*_1,...,z^*_N\right).$$
\end{theorem}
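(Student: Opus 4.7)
The plan is to recognize the family $\{\boldsymbol{n}(t)\}_{L\geq 1}$ as a density-dependent Markov chain in the sense of Kurtz \cite{tk70,tk71} and then quote his diffusion approximation theorem directly. Writing $\mathbf{n} = L\mathbf{z}$, every nonzero transition rate in \eqref{tf} has the form $L\,\beta_{\mathbf{k}}(\mathbf{z})$ with $\beta_{\mathbf{k}}(\mathbf{z}) := f_L(\mathbf{z},\mathbf{k})$, and, crucially, $\beta_{\mathbf{k}}$ is independent of $L$. The jump set consists only of the finitely many vectors $\pm e_i$ and $e_j - e_i$ (all uniformly bounded), and each $\beta_{\mathbf{k}}$ is a polynomial of degree at most two in $\mathbf{z}$, hence smooth with locally Lipschitz derivatives. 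These are exactly the hypotheses required by Kurtz's CLT.

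Next I would identify the coefficients of the limiting equation. The macroscopic drift is $\mathbf{F}(\mathbf{z}) = \sum_{\mathbf{k}}\mathbf{k}\,\beta_{\mathbf{k}}(\mathbf{z})$ and the infinitesimal covariance is $G(\mathbf{z})$ with entries $G_{ij}(\mathbf{z}) = \sum_{\mathbf{k}}k_i k_j\,\beta_{\mathbf{k}}(\mathbf{z})$; summing term by term over the five cases in $f_L$ recovers precisely the $F_i$ of \eqref{dfs1} and the $g_{ij}$ of \eqref{cov1}. Combining the initial condition $\sqrt{L}(Z_L(0) - z^*) = \zeta_0$ with the functional LLN just proved, Kurtz's theorem yields weak convergence in the Skorokhod space $D([0,T],\mathbb{R}^N)$ of $\zeta_L$ to the unique solution of the linear SDE
\begin{equation*}
d\zeta(t) = DF(z^*)\,\zeta(t)\,dt + \sigma(z^*)\,dW(t), \qquad \zeta(0) = \zeta_0,
\end{equation*}
with $W$ a standard $N$-dimensional Brownian motion and $\sigma(z^*)\sigma(z^*)^\top = (g_{ij}(z^*))$. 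This is the claimed OUP, with the $q_i$ in the statement being the diagonal entries of the drift matrix $DF(z^*)$.

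The principal technical obstacle is localization: Kurtz's CLT is proved in a neighborhood of the equilibrium, so one must confirm that $Z_L$ remains in a small ball around $z^*$ on $[0,T]$ with probability tending to $1$ as $L\to\infty$. This follows at once from the functional LLN, since the deterministic solution of \eqref{cm1} started at $z^*$ is identically $z^*$, so $\sup_{t\leq T}|Z_L(t) - z^*|\to 0$ in probability. The standard stopping-time device---truncating $\beta_{\mathbf{k}}$ smoothly outside a ball around $z^*$, applying the CLT to the truncated process, and then removing the truncation as the exit time tends to infinity---makes this precise. Because $DF(z^*)$ is a stable matrix by Proposition~\ref{eig} (all eigenvalues of its Jacobian have negative real part), the limiting OUP is itself stationary-ergodic, which is consistent with, though not needed for, the finite-horizon convergence statement.
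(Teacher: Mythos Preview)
Your proposal is correct and follows the same route as the paper: both invoke Kurtz's diffusion approximation \cite{tk70,tk71} for density-dependent Markov chains. In fact the paper does not give a proof of this theorem at all---it simply states the result ``following \cite{tk70,tk71}''---so your outline (checking that the rates have the required $L\beta_{\mathbf{k}}(\mathbf{z})$ form with $L$-independent smooth $\beta_{\mathbf{k}}$, identifying $\mathbf{F}$ and $G$ as the first and second moment sums, and handling localization via the LLN) supplies exactly the verification the paper omits. Your remark that the $q_i$ are only the diagonal entries of the full Jacobian $DF(z^*)$ is also apt: Kurtz's limit has drift matrix $DF(z^*)$, and the paper's formulation listing only $\partial F_i/\partial z_i$ is a slight understatement of the general result.
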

Thus, for the single, symmetric positive equilibrium for $N=2$, with a single inner competition rate $a_I^-$, a single outer competition rate $a_O^-$, and a single migration rate $a^+$, the infinitesimal drift is: $$q_1=q_2=\frac{ -a^{-}_{I}(\beta-\mu)}{ a^{-}_{I}+ a^{-}_{O}}- a^{+}, $$
and the infinitesimal covariance matrix entries are:
\begin{align*}
a_{11} = a_{22} = \frac{2(\beta-\mu)(\beta+ a^{+})}{ a^{-}_{I}+ a^{-}_{O}},\quad
a_{12} = a_{21} = \frac{-2a^{+}(\beta-\mu)}{ a^{-}_{I}+ a^{-}_{O}}. 
\end{align*}


\section{Ergodicity for $N$ boxes}\label{sec:Ergodicity}
Assume there is no suppression of population across boxes, i.e., $a^-_{ij} = 0$ for $i \ne j$.
We also assume that $a^-_{ii} > 0$ for some $i = 1,\ldots,N.$
For $N$ boxes, let $\{X_n\}_{n=0}^{\infty}$ on $(\mathbb{Z_+})^N$ be the embedded discrete time random walk associated 
with the continuous random walk \eqref{RW}. For $\mathbf{x} = (x_1,\ldots,x_N)\in\mathbb{Z_+}^N,$ set 
$$c(\mathbf{x}) = \sum_{i=1}^N \left(\beta_i + \mu_i + \frac{a^-_{ii}}{L}x_i\right)x_i + \sum_{i,j=1, i\ne j}^N a_{ij}^+ x_i.$$
$\{X_n\}$ has transition probabilities, 
for $\mathbf{x}, \mathbf{y} \in (\mathbb{Z_+})^N$, $\mathbf{x}\neq\mathbf{0}$
\begin{equation}\label{Transition}
P(\mathbf{x}, \mathbf{y}) = \frac{1}{c(\mathbf{x})}\cdot
\begin{dcases}
 \,  \beta_i x_i & \text{if } \mathbf{y} = \mathbf{x} + e_i, i=1,\ldots,N \\
 \, \mu_i x_i + \frac{a^-_{ii}}{L} x_i^2 & \text{if } \mathbf{y} = \mathbf{x} - e_i, i=1,\ldots,N\\
 \,  a_{ij}^+ x_i & \text{if } \mathbf{y} = \mathbf{x} - e_i + e_j, i \ne j \\
 \, 0 & \text{ otherwise }
\end{dcases}
\end{equation}
and for $\mathbf{x} = \mathbf{0}$,
\begin{equation}\label{Transition0}
P(\mathbf{x}, \mathbf{y}) =
\begin{dcases}
 \, \frac{1}{N}  & \text{if } \mathbf{y} = \mathbf{0} + e_i, i=1,\ldots,N \\
 \, 0 & \text{ otherwise }
\end{dcases}
\end{equation}
Recall that we use $e_i\in\mathbb{Z}^N$ to denote the vector with $1$ in the $i^{th}$ position and $0$ 
everywhere else, and $\mathbf{0} = (0,\ldots,0)$. We impose here a reflective 
barrier at $\mathbf{0}$ with \eqref{Transition0}.

\begin{theorem}
A random walk with transition probabilities \eqref{Transition} 
and \eqref{Transition0}
is geometrically ergodic.  That is, it is positive recurrent 
with exponential convergence to a stable distribution.
\end{theorem}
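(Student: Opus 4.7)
My plan is to verify the Foster--Lyapunov criterion for geometric ergodicity (e.g., Theorem 15.0.1 of Meyn and Tweedie). Three ingredients are required: $\phi$-irreducibility and aperiodicity of the embedded chain; a Lyapunov function $V:(\mathbb{Z}_+)^N \to [1,\infty)$ satisfying a (possibly $k$-step) geometric drift inequality
\[
P^k V(\mathbf{x}) \le \gamma V(\mathbf{x}) + K\,\mathbf{1}_C(\mathbf{x}), \qquad 0<\gamma<1,\ K<\infty,
\]
outside a finite set $C$; and petiteness of $C$. Since the state space is countable, once irreducibility is established every finite set is automatically petite, so the third ingredient needs no separate treatment.

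Irreducibility is straightforward: from the reflective boundary at $\mathbf{0}$ one reaches each $e_i$, pure births ($\beta_i>0$) then build up any coordinate, and migrations together with deaths reshuffle mass between boxes, provided the directed graph $\{(i,j):a^+_{ij}>0\}$ is connected (which I take as an implicit hypothesis, otherwise the chain decomposes and one would analyze each component separately). Aperiodicity holds because from any nonzero $\mathbf{x}$ the return-time set contains $2$ (a birth followed by an immediate death in the same box) together with longer values obtained via migration loops, forcing the period to be $1$.

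The heart of the argument is the drift. I would take $V(\mathbf{x}) = \theta^{|\mathbf{x}|}$ with $|\mathbf{x}| = x_1+\cdots+x_N$ and $\theta>1$ chosen close to $1$. Since migrations leave $|\mathbf{x}|$ unchanged and only births and deaths alter it by $\pm 1$, a short computation yields
\[
\frac{PV(\mathbf{x})}{V(\mathbf{x})} = 1 + \frac{(\theta-1)\sum_i \beta_i x_i + (\theta^{-1}-1)\bigl[\sum_i \mu_i x_i + L^{-1}\sum_i a^-_{ii} x_i^2\bigr]}{c(\mathbf{x})}.
\]
When the coordinate $x_{i_0}$ of a box with $a^-_{i_0 i_0}>0$ is of order $|\mathbf{x}|$, the quadratic term dominates the numerator with the (negative) sign of $\theta^{-1}-1$, so the ratio falls below a fixed $\gamma<1$ uniformly, and the one-step geometric drift holds throughout this regime.

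The main obstacle, and the step I expect to require the most care, is the complementary regime in which $|\mathbf{x}|$ is large but the mass is concentrated in boxes $j \ne i_0$ for which $a^-_{jj}=0$; there the quadratic damping is invisible in one step and the drift of $|\mathbf{x}|$ may actually be positive. To close this gap I would pass to a $k$-step drift: by the connectivity of the migration matrix there is a fixed $k=k(N)$ and a probability $p>0$ such that, starting from any configuration of total mass $m\ge 1$, within $k$ steps a positive fraction of the mass has been routed through box $i_0$, where the quadratic contraction activates. Since $|\mathbf{x}|$ can grow by at most one per step, the accumulated damping dominates the bounded linear growth once $|\mathbf{x}|$ exceeds a sufficiently large threshold $R$, yielding the required inequality $P^k V \le \gamma V + K\,\mathbf{1}_{\{|\mathbf{x}|\le R\}}$. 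Together with the irreducibility and the automatic petiteness of finite sets, Meyn--Tweedie's theorem then delivers geometric ergodicity with exponential convergence to the unique stationary distribution.
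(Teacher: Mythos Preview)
Your core argument is exactly the paper's: Foster--Lyapunov drift (Meyn--Tweedie, Theorem 15.0.1) with the exponential function $V(\mathbf{x})=\alpha^{\|\mathbf{x}\|_1}$, using that migrations leave $\|\mathbf{x}\|_1$ fixed while the quadratic competition term drives the death rate above the birth rate once $\|\mathbf{x}\|$ is large. The paper simply carries out the one-step version, verifying
\[
(\alpha-\lambda)\sum_i\beta_i x_i+(1-\lambda)\sum_i A_i^+x_i\le(\lambda-\alpha^{-1})\sum_i\Bigl(\mu_i+\tfrac{a^-_{ii}}{L}x_i\Bigr)x_i
\]
for $\|\mathbf{x}\|_2$ larger than an explicit threshold, and takes $C$ to be the corresponding $\ell^2$-ball. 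So on the main line you and the paper agree.

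Where you diverge is in your treatment of boxes with $a^-_{jj}=0$. You are right to flag this: the paper only assumes $a^-_{ii}>0$ for \emph{some} $i$, yet its final inequality $C_3\|\mathbf{x}\|_2^2\le\sum_i(a^-_{ii}/L)x_i^2$ tacitly uses $a^-_{ii}\ge LC_3$ for \emph{every} $i$, so the paper's own chain is only clean when all inner competition rates are positive. Your instinct to repair this with migration is sound, but the specific $k$-step argument you sketch does not work. In this chain a single transition moves at most one particle, so in a \emph{fixed} number $k$ of steps at most $k$ particles can reach box $i_0$; you cannot route a ``positive fraction'' of a total mass $m\to\infty$ through $i_0$ in $k=k(N)$ steps. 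Consequently the quadratic damping you hope to activate contributes only $O(1)$, not $O(m)$, and the $k$-step drift of $\theta^{|\mathbf{x}|}$ is no better than the one-step drift in this regime. If you want to cover the genuinely degenerate case $a^-_{jj}=0$ for some $j$, you will need either a state-dependent number of steps or, more cleanly, a weighted Lyapunov function such as $V(\mathbf{x})=\theta^{\sum_i w_i x_i}$ with weights chosen so that migration out of an uncontrolled box produces a strict decrease; the one-step drift then closes without any multi-step argument.
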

\begin{proof}
Using Foster's \cite{Fo53} criterion,~\cite[Theorem~15.01]{MT93} 
(see also similar results in \cite{FMM95}) states 
that if there is a function $V:(\mathbb{Z_+})^N \to \mathbb{R}$ with $V(\mathbf{x}) \geq 1$ 
for all $\mathbf{x}\in(\mathbb{Z_+})^N$ such that, for a bounded set $B\subset (\mathbb{Z^+})^N$, 
constant $\lambda < 1$, and constant $b < \infty$,
\begin{equation}\label{ly1}
\sum_{\mathbf{y}\in(\mathbb{Z_+})^N}P(\mathbf{x},\mathbf{y})V(\mathbf{y}) \leq \lambda V(\mathbf{x}) + b\mathds{1}_B(\mathbf{x}), 
\end{equation}
then the Markov chain with probability transition matrix $P$ is geometrically ergodic.  
Here, $\mathds{1}_B(\mathbf{x})$ is the indicator function of $B$.
Let $$V(\mathbf{x}) = \alpha^{||\mathbf{x}||_1}, $$
where we will choose appropriate $\alpha > 1$, and $||\mathbf{x} ||_1$
is the $L^1$ norm of $\mathbf{x}$.
Note that, for $\mathbf{x}\in(\mathbb{Z_+})^N$, 
$$||\mathbf{x}||_1 = \sum_{i=1}^{N}|x_i| = \sum_{i=1}^{N}x_i.$$
Then, for $\mathbf{x} \notin B$ and if $\lambda\alpha > 1$, criterion~\eqref{ly1} is equivalent to
\begin{equation}\label{ly2}
\left(\alpha - \lambda\right)\sum_{i=1}^{N} \beta_ix_i + \left(1-\lambda\right)\sum_{i=1}^N A_i^+x_i
\leq \left(\lambda - \frac{1}{\alpha}\right)\sum_{i=1}^{N}\left(\mu_i + \frac{a_{ii}^-}{L}x_i\right)x_i
\end{equation}
for some $\lambda < 1$, where 
$$A_i^+ := \sum_{j=1}^Na_{ij}^+$$
is the total migration rate out of box $i$.
Let $$C_1 = \max_i \beta_i,\quad C_2 = \max_i  A^+_i,\quad
C_3 = \min_i \left\{\frac{a^-_{ii}}{L}: a^-_{ii}>0\right\}.$$
Then, for $\mathbf{x}\in(\mathbb{Z}_+)^N$ with 
\begin{equation}\label{x2ineq}
||\mathbf{x}||_2 \geq 
\frac{\sqrt{N}\left(\alpha C_1 + C_2\right)}{C_3\left(\lambda - 1/\alpha\right)}, 
\end{equation}
where $$||\mathbf{x}||_2 = \left(\sum_{i=1}^{N}x_i^2\right)^{1/2},$$
\begin{align*}
 \left(\alpha - \lambda\right)\sum_{i=1}^{N} \beta_ix_i + \left(1-\lambda\right)\sum_{i=1}^N A_i^+x_i & \leq
 \left(\alpha - \lambda\right)C_1 ||\mathbf{x}||_1 + \left(1-\lambda\right)C_2||\mathbf{x}||_1\\
 & \leq \sqrt{N}\left(\left(\alpha - \lambda\right)C_1  + \left(1-\lambda\right)C_2\right)||\mathbf{x}||_2\\
 & \leq \sqrt{N}\left(\alpha C_1  + C_2\right)||\mathbf{x}||_2\\
 & \leq C_3\left(\lambda - \frac{1}{\alpha}\right)||\mathbf{x}||_2^2\\
 & \leq \left(\lambda - \frac{1}{\alpha}\right)\sum_{i=1}^{N}\left(\mu_i + \frac{a_{ii}^-}{L}x_i\right)x_i,
\end{align*}
where the second inequaity is due to the Cauchy-Schwarz inequality, and the fourth inequality 
is due to our assumption \eqref{x2ineq}. The other inequalities follow from the definitions of 
$C_1, C_2,$ and $C_3$.

Thus, choose $$M = \frac{\sqrt{N}\left(\alpha C_1 + C_2\right)}{C_3\left(\lambda - 1/\alpha\right)},$$  
and let $$B = \left\{\,\mathbf{x}\in\left(\mathbb{Z}_+\right)^N\:\big|\: ||\mathbf{x}||_2 \leq M\,\right\}.$$  
Then $B$ is a bounded set, $V(\mathbf{x})\geq 1$ on $\left(\mathbb{Z}_+\right)^N$.
Let $$b = \max\left\{\left|\sum_{\mathbf{y}\in(\mathbb{Z}_+)^N}P(\mathbf{x},\mathbf{y})V(\mathbf{y}) - \lambda V(\mathbf{x})\right| :
\mathbf{x}\in\left(\mathbb{Z}_+\right)^N, ||\mathbf{x}||_2\leq M\right\}.$$
Then \eqref{ly1} is satisfied for all $\mathbf{x}\in(\mathbb{Z}_+)^N$.
\end{proof}

Suppose, finally, that we impose symmetric conditions on all of the boxes:
 \begin{enumerate}
 \item[1)] $\beta_i \equiv \beta$ and $\mu_i \equiv \mu$ for all $i$, with $\beta > \mu$,
 \item[2)] migration rates between all boxes are equal to $a^+$, 
 that is, $a^+_{ij} \equiv a^+$ for all $i, j$,
 \item[3)] suppression of population within its own box occurs 
 at the same rate for all boxes, i.e., $a^-_{ii} \equiv a_I^-$ for all $i$.
\end{enumerate}
Then, as is directly checked, the random walk has at least one non-trivial equilibrium point,  
that is, the drift vector $$\triangle \mathbf{x}:= \displaystyle\sum_{\mathbf{y}} P(\mathbf{x}, \mathbf{y}) \mathbf{y} - \mathbf{x} = 0$$ (cf. \cite{MT93})
at two points, the trivial point 0, and $\mathbf{x}$, where $$\frac{x_i}{L} = \frac{\beta-\mu}{a_I^-}$$ 
for all components $i$.  This follows from a computation for each component $i$ that 
	\[(\triangle \mathbf{x})_i = \frac{1}{c(\mathbf{x})} \left[ (\beta - \mu) x_i - \frac{a_I^-}{L} x_i^2 + a^+ \big(\sum_{j\ne i} x_j - (N-1) x_i \big)\right]. \]
The equilibrium result agrees with our earlier results in Proposition \ref{eig}.


\bibliographystyle{abbrvnat}
\bibliography{bibdata}

\begin{thebibliography}{9}
\providecommand{\natexlab}[1]{#1}
\providecommand{\url}[1]{\texttt{#1}}
\expandafter\ifx\csname urlstyle\endcsname\relax
  \providecommand{\doi}[1]{doi: #1}\else
  \providecommand{\doi}{doi: \begingroup \urlstyle{rm}\Url}\fi

\bibitem[Bessonov et~al.(2014)Bessonov, Molchanov, and Whitmeyer]{BMW14}
M.~Bessonov, S.~Molchanov, and J.~Whitmeyer.
\newblock A mean field approximation of the {B}olker-{P}acala population model.
\newblock \emph{Markov Processes and Related Fields}, 20\penalty0 (2):\penalty0
  329--348, 2014.

\bibitem[Bolker and Pacala(1999)]{bp99}
B.~Bolker and S.~Pacala.
\newblock Spatial moment equations for plant competition: {U}nderstanding
  spatial strategies and the advantages of short dispersal.
\newblock \emph{The American Naturalist}, 153\penalty0 (6):\penalty0 575--602,
  1999.
\newblock URL \url{http://dx.doi.org/10.1086/303199}.

\bibitem[Bolker et~al.(2003)Bolker, Pacala, and Neuhauser]{bpn03}
B.~Bolker, S.~Pacala, and C.~Neuhauser.
\newblock Spatial dynamics in model plant communities: {W}hat do we really
  know?
\newblock \emph{The American Naturalist}, 162\penalty0 (2):\penalty0 135--148,
  2003.
\newblock URL \url{http://dx.doi.org/10.1086/376575}.

\bibitem[Fayolle et~al.(1995)Fayolle, Malyshev, and Menshikov]{FMM95}
G.~Fayolle, V.~Malyshev, and M.~Menshikov.
\newblock \emph{Topics in the Constructive Theory of Countable Markov Chains}.
\newblock Cambridge University Press, Cambridge, 1995.

\bibitem[Finkelshtein et~al.(2013)Finkelshtein, Kondratiev, Kozitsky, and
  Kutovyi]{fkkk2}
D.~Finkelshtein, Y.~Kondratiev, Y.~Kozitsky, and O.~Kutovyi.
\newblock Stochastic evolution of a continuum particle system with dispersal
  and competition: {M}icro- and mesoscopic description.
\newblock \emph{The European Physical Journal Special Topics}, 216\penalty0
  (1):\penalty0 107--116, 2013.
\newblock URL \url{http://dx.doi.org/10.1140/epjst/e2013-01733-3}.

\bibitem[Foster(1953)]{Fo53}
F.~Foster.
\newblock On the stochastic matrices associated with certain queuing processes.
\newblock \emph{Annals of Mathematical Statistics}, 24\penalty0 (3):\penalty0
  355--360, 1953.
\newblock URL \url{http://dx.doi.org/10.1214/aoms/1177728976}.

\bibitem[Kurtz(1970)]{tk70}
T.~Kurtz.
\newblock Solutions of ordinary differential equations as limits of pure jump
  {M}arkov processes.
\newblock \emph{Journal of Applied Probability}, 7\penalty0 (1):\penalty0
  49--58, 1970.
\newblock URL \url{http://dx.doi.org/10.2307/3212147}.

\bibitem[Kurtz(1971)]{tk71}
T.~Kurtz.
\newblock Limit theorems for sequences of jump {M}arkov processes approximating
  ordinary differential equations.
\newblock \emph{Journal of Applied Probability}, 8\penalty0 (2):\penalty0
  344--356, 1971.
\newblock URL \url{http://dx.doi.org/10.2307/3211904}.

\bibitem[Meyn and Tweedie(1993)]{MT93}
S.~Meyn and R.~Tweedie.
\newblock \emph{Markov Chains and Stochastic Stability}.
\newblock Springer, New York, 1993.
\newblock URL \url{http://dx.doi.org/10.1007/978-1-4471-3267-7}.

\end{thebibliography}
\end{document}